\documentclass[11pt]{amsart}


\usepackage{amsthm}
\usepackage{amssymb}
\usepackage{graphicx}									 
\usepackage{stmaryrd}                  
\usepackage{footmisc}                  
\usepackage{paralist}
\usepackage{wrapfig}
\usepackage[draft]{pdfcomment}
\usepackage{bbm}
\usepackage[T1]{fontenc}
\usepackage[utf8]{inputenc}
\usepackage[arrow, matrix, curve]{xy}
\usepackage{color}

\usepackage{tikz-cd}

\usepackage{enumitem}


\newtheorem{thm}{Theorem}[section]

\newtheorem{lem}[thm]{Lemma}

\newtheorem{cor}[thm]{Corollary}

\theoremstyle{definition}

\theoremstyle{remark}



\newcommand{\R}{\mathbb{R}}

\newcommand{\N}{\mathbb{N}}

\newcommand{\To}{\rightarrow}

\newcommand{\Orb}{\mathcal{O}}

\newcommand{\Or}{\mathrm{O}}

\setlength\textwidth{\textwidth+80pt}
\setlength\oddsidemargin{\oddsidemargin-40pt}
\setlength\evensidemargin{\evensidemargin-40pt}


\title[Orbifolds and manifold quotients with upper curvature bounds]{Orbifolds and Manifold quotients \\ with upper curvature bounds}

\author[C. Lange]{Christian Lange}
\address{Ludwig-Maximilians-Universit\"at M\"unchen, Mathematisches Institut\newline\indent Theresienst. 39, 80333 München, Germany}
\email{lange@math.lmu.de, clange.math@gmail.com}


\begin{document}

\maketitle
\begin{abstract}
We characterize Riemannian orbifolds with an upper curvature bound in the Alexandrov sense as reflectofolds, i.e. Riemannian orbifolds all of whose local groups are generated by reflections, with the same upper bound on the sectional curvature. Combined with a result by Lytchak--Thorbergsson this implies that a quotient of a Riemannian manifold by a closed group of isometries has locally bounded curvature (from above) in the Alexandrov sense if and only if it is a reflectofold.
\end{abstract}

\section{Introduction}

Let $M$ be a Riemannian manifold and let $G$ be a closed group of isometries of $M$. Then the quotient space $M/G$ is a metric space whose metric properties are often related to properties of the action in an interesting way,  see e.g. \cite{GLLM22,LT10}. Usually, this quotient is not a Riemannian manifold, but an Alexandrov space with curvature locally bounded from below. Nevertheless, it is stratified by Riemannian manifolds and the so-called principle stratum is open and dense \cite{AB15}. Lytchak and Thorbergsson have shown that the sectional curvature of this principle stratum is bounded from above in the neighborhood of a point if and only if this neighborhood in $M/G$ is a Riemannian orbifold, i.e. a metric space which is locally isometric to the quotient of a Riemannian manifold by an isometric action of a finite group \cite[Theorem~1.1]{LT10}. However, the curvature of such a quotient is in general still locally unbounded from above in the Alexandrov sense. For instance, the quotient of $\R^2$ by a finite cyclic group of rotations around the origin is isometric to the Euclidean cone over a circle of radius $2\pi/k$ for some $k>1$ and exhibits infinite positive curvature at the tip of the cone. Infinitesimally, the only exceptions of this phenomenon one can think of are quotients of $\R^n$ by finite reflection groups \cite{Hu90}. In this case the quotient is isometric to a Weyl chamber of the corresponding reflection group and thus flat. Globally, these examples correspond to so-called reflectofolds, i.e. Riemannian orbifolds all of whose local groups are reflection groups \cite{Da11}. In particular, reflectofolds are Riemannian manifolds in their interior. In fact, metric spaces with two-sided curvature bounds are always Riemannian manifolds (perhaps of low regularity) in their interior \cite{BN93}, cf. \cite[Theorem~10.10.13]{BBI01}. Here we confirm that for the whole quotient space no other examples than reflectofolds locally have a two-sided curvature bound.

\begin{thm}\label{thm:main_thm} The curvature of a Riemannian orbifold is locally bounded from above in the Alexandrov sense if and only if it is a reflectofold. In this case, locally, the curvature is bounded from above by $k$ in the Alexandrov sense if and only if the sectional curvature is  bounded from above by $k$.
\end{thm}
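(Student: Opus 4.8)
The plan is to reduce everything to a local and then infinitesimal analysis, since both an Alexandrov upper curvature bound and a sectional curvature bound are local conditions. Near a point $p$ the orbifold is isometric to a quotient $\tilde U/\Gamma$, where $\tilde U$ is a small ball in a Riemannian $n$-manifold and $\Gamma=\Gamma_p\subset\Or(n)$ is the finite local group; its tangent cone at $p$ is then the Euclidean cone $\R^n/\Gamma$ over the spherical quotient $S^{n-1}/\Gamma$, and the latter is exactly the space of directions $\Sigma_p$.

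For the necessity direction (an upper bound forces a reflectofold and forces $\sec\le k$), I would invoke two standard facts from the theory of spaces with curvature bounded above. First, restricting to the manifold part, where the Alexandrov and Riemannian notions agree, an Alexandrov bound $\le k$ yields $\sec\le k$ there, and this extends across the mirrors by continuity. Second, in any space of curvature $\le k$ the space of directions at each point is $\mathrm{CAT}(1)$, so at $p$ the quotient $S^{n-1}/\Gamma$ is $\mathrm{CAT}(1)$. The heart of the argument is then the lemma that $S^{n-1}/\Gamma$ is $\mathrm{CAT}(1)$ if and only if $\Gamma$ is generated by reflections (equivalently, that the cone $\R^n/\Gamma$ is $\mathrm{CAT}(0)$, i.e.\ isometric to a convex flat cone). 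Granting this, $\mathrm{CAT}(1)$ of every space of directions forces every local group to be a reflection group, so the orbifold is a reflectofold.

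For sufficiency with the sharp constant, I would use the description of the local model of a reflectofold as a chamber. Writing the model as $N/W$ with $W$ generated by reflections in totally geodesic mirrors and $\sec_N\le k$, the manifold $N$ is locally $\mathrm{CAT}(k)$, and a fundamental chamber $C\cong N/W$ is locally convex: its mirrors are totally geodesic, and at each point its tangent cone is the chamber of the isotropy reflection group, which is a convex Euclidean (or spherical) cone, the dihedral angles $\pi/m$ being at most $\pi/2$. Since a locally convex subset of a locally $\mathrm{CAT}(k)$ space is itself locally $\mathrm{CAT}(k)$, the quotient $N/G$ has curvature $\le k$ in the Alexandrov sense with the \emph{same} $k$. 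Combined with the necessity of $\sec\le k$ from the previous step, this gives the stated equivalence of the two bounds on a reflectofold.

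I expect the main obstacle to be the key lemma characterizing which spherical quotients are $\mathrm{CAT}(1)$. The ``if'' direction is comparatively soft: a reflection group quotient $S^{n-1}/\Gamma$ is a convex spherical simplex, hence $\mathrm{CAT}(1)$. The ``only if'' direction is the crux: one must show that if $\Gamma$ is not generated by reflections, then $S^{n-1}/\Gamma$ fails $\mathrm{CAT}(1)$, which amounts to producing at some point a closed geodesic of length $<2\pi$ (the infinitesimal analogue of the cone-angle defect in the quotient of $\R^2$ by rotations). I would organize this by induction on dimension through the iterated space-of-directions structure, using that point stabilizers in a finite reflection group are again reflection groups so that the inductive hypothesis applies to the lower-dimensional spherical quotients $S^{n-2}/\Gamma'$; the remaining work is to exhibit the requisite short geodesic loop in the base case where $\Gamma$ contains no reflections, and controlling this threshold precisely is also what pins down the sharpness of the constant $k$ in the sufficiency direction.
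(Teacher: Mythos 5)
Your overall strategy is the same as the paper's: use density of the regular part together with Alexandrov's theorem to get the sectional curvature bound, use the theorem that spaces of directions of spaces with curvature bounded above are $\mathrm{CAT}(1)$ to reduce the ``only if'' direction to the lemma that $S^{n-1}/\Gamma$ being $\mathrm{CAT}(1)$ (equivalently $\R^n/\Gamma$ being $\mathrm{CAT}(0)$) forces $\Gamma<\Or(n)$ to be generated by reflections, prove that lemma by induction through the iterated space-of-directions structure, and prove the ``if'' direction via convexity of the (exponential image of the) Weyl chamber. However, your case analysis in the crux lemma has a genuine gap. After the inductive step you know that every point stabilizer $\Gamma_v$, $v\in S^{n-1}$, is generated by reflections (note the fact you invoke -- that stabilizers in reflection groups are reflection groups -- is not what is needed here; what does the work is the $\mathrm{CAT}(1)$ property of the space of directions at $\overline{v}$ plus the induction hypothesis). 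The cases that remain are \emph{not} exhausted by ``$\Gamma$ contains no reflections''. Set $\Gamma'=\left\langle \Gamma_v \mid v\in S^{n-1}\right\rangle$, a normal subgroup generated by reflections. If $\Gamma'$ is trivial, then $\Gamma$ acts freely on $S^{n-1}$ and your short-closed-geodesic argument applies. But you must also exclude the possibility that $\Gamma'$ is nontrivial and properly contained in $\Gamma$: in that case $\Gamma$ \emph{does} contain reflections yet is not generated by them, it does not act freely on $S^{n-1}$, and its nontrivial stabilizers are all reflection groups -- so neither your inductive step nor your ``no reflections'' base case says anything about it.

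The paper closes precisely this case with a separate argument: since $\Gamma'$ is a nontrivial reflection group, $S^{n-1}/\Gamma'$ splits as a spherical join $S^{k-1}*\Delta$, where $S^{k-1}$ is the unit sphere of $\mathrm{Fix}(\Gamma')$ and $\Delta$ is a strictly convex spherical simplex; the induced action of $\Gamma/\Gamma'$ on $S^{n-1}/\Gamma'$ is free but fixes the barycenter of $\Delta$, which forces $\Gamma=\Gamma'$ (alternatively, $S^{n-1}/\Gamma'$ is contractible and finite dimensional, and a nontrivial finite group cannot act freely on such a space). If you want to stay within your geodesic picture, a repair is possible: $\Gamma/\Gamma'$ acts linearly and freely on $S^{k-1}\subset\mathrm{Fix}(\Gamma')$ (any element fixing $v\in S^{k-1}$ lies in $\Gamma_v\subseteq\Gamma'$), and the quotient $S^{k-1}/(\Gamma/\Gamma')$ sits totally geodesically inside $S^{n-1}/\Gamma$, so the same linear-algebra construction of a short closed geodesic yields a contradiction -- but some argument of this kind must be supplied, and without it the proof of the key lemma, and hence of the theorem, is incomplete.
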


In the manifold case this statement is due to Alexandrov \cite{Al51} based on earlier work by Cartan \cite{Ca28}.
Our proof of Theorem \ref{thm:main_thm} works by induction on the dimension and uses the fact that tangent spaces of metric spaces with curvature bounded from above are CAT($0$), see Theorem \ref{thm:tangent_cat}. The main step is to prove Lemma \ref{lem:local_cat_char} which characterizes CAT($0$) spaces among quotient spaces $\R^n/\Gamma$ for finite subgroups $\Gamma <\Or(n)$. Namely, such a quotient is CAT($0$) if and only if $\Gamma$ is generated by reflections, in which case the quotient is isometric to a Weyl chamber of $\Gamma$ if $\Gamma$ is nontrivial. The strategy of the proof is related to the one in \cite[Section~4]{La16} and \cite[Lemma~4.17]{La19}.

Based on the result by Lytchak--Thorbergsson \cite[Theorem~1.1]{LT10} we state the following corollary.

\begin{cor} \label{thm:gen_m_g_cat} Let $M$ be a Riemannian manifold and let $G$ be a closed group of isometeries of $M$. Let $p \in M$ be a point with isotropy $G_p$. Set $\bar p=Gp \in M/G$. Then the following are equivalent:
\begin{enumerate}
\item The curvature of $M/G$ is bounded from above in a neighborhood of $\bar p$ in the Alexandrov sense.
\item A neighborhood of $\bar p$ in $M/G$ is a reflectofold.
\item The action of $G_p$ on $T_pM$ is polar and and the corresponding polar group $\Pi$ is generated by reflections.
\end{enumerate}
\end{cor}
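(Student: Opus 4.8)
The plan is to derive the three-way equivalence from Theorem \ref{thm:main_thm} together with the orbifold characterization of Lytchak--Thorbergsson, after reducing the local geometry of $M/G$ at $\bar p$ to a linear model. By the slice theorem a $G$-invariant tube around the orbit $Gp$ is equivariantly diffeomorphic to $G\times_{G_p}\nu_p$, where $\nu_p\subset T_pM$ is the normal space to the orbit and $G_p$ acts by the slice representation; consequently a neighborhood of $\bar p$ in $M/G$ is identified with a neighborhood of the apex of the Euclidean cone $\nu_p/G_p$, and the tangent cone of $M/G$ at $\bar p$ is exactly this quotient. In particular all three conditions are local conditions that can be read off from the slice representation, and the polar group $\Pi$ of (iii) is the generalized Weyl group of this representation.

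I would first establish (ii)$\Leftrightarrow$(i). A reflectofold is by definition a Riemannian orbifold, so if a neighborhood of $\bar p$ is a reflectofold, then Theorem \ref{thm:main_thm} yields an upper Alexandrov curvature bound, which is (i). Conversely, an upper Alexandrov bound on $M/G$ restricts, on the open dense principal stratum (a smooth Riemannian manifold), to an upper bound on the sectional curvature by the classical manifold case; then \cite[Theorem~1.1]{LT10} shows that a neighborhood of $\bar p$ is a Riemannian orbifold, and Theorem \ref{thm:main_thm} upgrades the Alexandrov bound to the reflectofold property, giving (ii).

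It remains to prove (ii)$\Leftrightarrow$(iii). If the slice representation is not polar, then by \cite[Theorem~1.1]{LT10} no neighborhood of $\bar p$ is a Riemannian orbifold, so in particular it is not a reflectofold and (ii) fails; thus I may assume polarity, which is the first half of (iii). For a polar representation the quotient $\nu_p/G_p$ is isometric to $\Sigma/\Pi$, where $\Sigma$ is a section and $\Pi$ the finite polar group acting on $\Sigma\cong\R^m$ as a finite subgroup of $\Or(m)$; Lemma \ref{lem:local_cat_char} then tells us exactly when this flat quotient is a Weyl chamber, namely precisely when $\Pi$ is generated by reflections. Finally I would observe that the local groups of the orbifold $M/G$ at points near $\bar p$ are, up to conjugacy, the isotropy subgroups $\Pi_x$ of $\Pi$ acting on $\Sigma$; these are parabolic subgroups, and they are all reflection groups if and only if $\Pi$ itself is. Hence the neighborhood is a reflectofold if and only if $\Pi$ is generated by reflections, which is (iii).

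The main obstacle is the reduction in the first and third paragraphs: passing rigorously from the manifold quotient $M/G$ to the linear slice model and identifying the orbifold local group at $\bar p$ with the polar group $\Pi$. This requires that the slice theorem respect the metric structure closely enough that the curvature bound, the orbifold property, and the reflectofold property all transfer between $M/G$ near $\bar p$ and $\nu_p/G_p$ near the apex, and that the passage from the possibly non-finite group $G_p$ to the finite polar group $\Pi$ via $\nu_p/G_p\cong\Sigma/\Pi$ be carried out so that Lemma \ref{lem:local_cat_char}, stated for finite subgroups of $\Or(m)$, applies. Granting the standard structure theory of polar representations for the latter point and \cite[Theorem~1.1]{LT10} for the orbifold identification, the remaining content is bookkeeping that combines Theorem \ref{thm:main_thm} with Lemma \ref{lem:local_cat_char}.
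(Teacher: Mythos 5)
Your proposal is correct and follows essentially the same route as the paper: the equivalence (i)$\Leftrightarrow$(ii) via Theorem \ref{thm:alexandrov_char}, \cite[Theorem~1.1]{LT10} and Theorem \ref{thm:main_thm}, and the equivalence with (iii) via the polarity criterion of \cite[Theorem~1.1]{LT10} together with the identification of the tangent cone of $M/G$ at $\bar p$ with $\Sigma/\Pi$, to which Lemma \ref{lem:local_cat_char} applies. The paper's own proof is much terser—it simply cites these ingredients—so your elaboration (slice model, isotropy subgroups of $\Pi$ as the nearby local groups, parabolic subgroups being reflection groups) is a faithful expansion of what the paper leaves implicit rather than a different argument.
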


Here the action of $G_p$ on $T_pM$ is called polar if there exists a subspace $\Sigma \subseteq T_pM$ that meets all orbits of the $G_p$-action and meets them always orthogonally. The polar group $\Pi$ is defined to be the quotient of the subgroup that leaves $\Sigma$ invariant modulo the subgroup that fixes $\Sigma$ pointwise. It acts naturally on $\Sigma$. In particular, condition $(iii)$ is satisfied, if $G_p$ is connected \cite[Proposition~1.4]{GZ12} and if $G_p$ is Coxeter polar like the isotropy representations of a symmetric space. For more details on polar action we refer to the survey \cite{GZ12}.
\section{Quotients with upper curvature bounds}\label{sec:proof}

\subsection{Spaces with upper curvature bounds}

We say that a metric space $X$ is $D$-geodesic for some $D>0$ if all points of distance less than $D_k$ are connected by a geodesic. Let $D_k$ be the diameter of the complete model plane of constant curvature $k$. We say that $X$ is CAT($k$) if it is $D_k$-geodesic and all geodesic triangles in $X$ of perimeter less than $2D_k$ satisfy the CAT($k$) comparison condition, see \cite[II.1]{BH99}. We say that $X$ has curvature bounded from above by $k$ in the Alexandrov sense if it is locally CAT($k$). This definition is motivated by the following result of Alexandrov. A proof can also be found in \cite[Theorem~1A.6]{BH99}. 

\begin{thm}[Alexandrov, \cite{Al51}]\label{thm:alexandrov_char} A Riemannian manifold has curvature bounded from above by $k$ in the Alexandrov sense if and only if its sectional curvature is bounded from above by $k$.
\end{thm}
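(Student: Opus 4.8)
The statement is local, so I fix a point $p$ and work inside a metric ball $B=B(p,r)$ whose radius $r$ is smaller than the injectivity radius at $p$, smaller than the convexity radius, and smaller than $D_k/2$. In such a ball any two points are joined by a unique minimizing geodesic depending smoothly on its endpoints, and no two points are conjugate along it. I would prove the two implications separately, and in both directions the key tool is the behavior of Jacobi fields.

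For the implication that $\sec\le k$ forces $B$ to be CAT($k$), the plan is to run a Jacobi field comparison. Fix $q\in B$ and two unit-speed geodesics $\gamma_1,\gamma_2$ issuing from $q$ with Alexandrov angle $\theta$. Realizing the endpoint of $\gamma_2$ as the endpoint of a one-parameter family of geodesics emanating from $q$ with varying initial direction, the associated variation field is a Jacobi field $J$ along $\gamma_1$ with $J(0)=0$. Rauch's comparison theorem, applied under the hypothesis $\sec\le k$, bounds $|J|$ from below by the norm of the corresponding Jacobi field in the model plane $M_k^2$. Integrating this estimate shows that the side of the hinge opposite to $\theta$ is at least as long as the corresponding side of the comparison hinge in $M_k^2$; equivalently, the angle between $\gamma_1$ and $\gamma_2$ is no larger than the comparison angle $\bar\theta$. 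Since this holds for every hinge inside $B$, the angle comparison holds for every geodesic triangle in $B$, and by the standard equivalence of the angle and distance formulations of the CAT($k$) inequality in a uniquely geodesic space \cite{BH99}, $B$ is CAT($k$). This is, in essence, the contribution going back to Cartan.

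For the converse I would argue by contraposition: assuming the sectional curvature exceeds $k$ somewhere, I produce arbitrarily small triangles violating the CAT($k$) condition. Suppose $K(\sigma)=k+\delta$ with $\delta>0$ for some $2$-plane $\sigma\subset T_qM$. Consider the surface $S=\exp_q(\sigma\cap B)$, which is totally geodesic at $q$, and inside it a family of geodesic triangles $T_\epsilon$ with vertex $q$ and legs of length $\epsilon$ along two fixed directions in $\sigma$. A second-order expansion of the metric in geodesic normal coordinates — equivalently, Gauss--Bonnet applied to $T_\epsilon$ — shows that the excess of the vertex angle of $T_\epsilon$ over the comparison angle in $M_k^2$ is, to leading order in $\epsilon$, a positive multiple of $K(\sigma)-k=\delta$. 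Thus for all small $\epsilon$ the actual angle strictly exceeds the comparison angle, contradicting the angle form of the CAT($k$) condition, so $\sec\le k$ everywhere.

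The main obstacle is the curvature-sign bookkeeping in the converse: one must expand the relevant angle (or, alternatively, the length of the side opposite the vertex) of $T_\epsilon$ to the order at which $K(\sigma)-k$ first appears, and verify both that this coefficient has the expected sign and that all lower-order terms agree exactly with those of the model, so that the discrepancy is genuinely governed by $K(\sigma)-k$. The forward direction is comparatively routine once Rauch's theorem and the equivalence of the CAT($k$) formulations are available; there the only subtlety is to keep every construction inside a ball small enough to guarantee geodesic uniqueness and to respect the $D_k$-constraints when $k>0$.
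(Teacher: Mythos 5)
The paper offers no proof of this theorem at all: it is quoted as Alexandrov's result, with the proof deferred to \cite{Al51} and to \cite[Theorem~1A.6]{BH99}. So your proposal can only be measured against that classical argument, and your overall architecture --- Jacobi field comparison for the direction $\sec\le k\Rightarrow$ locally CAT($k$), and a Taylor expansion of small hinges for the converse --- is exactly the classical one. The converse half of your sketch is sound: for unit vectors $u,v$ spanning $\sigma$ at angle $\theta$, the quantity $d\bigl(\exp_q(\epsilon u),\exp_q(\epsilon v)\bigr)^2$ agrees with the model law of cosines for curvature $k$ up to an error of order $\epsilon^4$ equal to a negative multiple of $\bigl(K(\sigma)-k\bigr)\epsilon^4\sin^2\theta$ plus higher-order terms, so when $K(\sigma)>k$ the vertex angle of arbitrarily small hinges strictly exceeds its comparison angle, and no neighborhood of $q$ can be CAT($k$).

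The genuine gap is in the forward direction: the way you integrate the Rauch estimate produces an inequality pointing the wrong way. Your one-parameter family sweeps geodesics emanating from $q$ between $\gamma_1$ and $\gamma_2$; Rauch gives $|J|\ge|\tilde J|$ for the variation fields, and integrating over the family parameter bounds from below the length of the curve traced out by the far endpoints of the sweep. But that swept curve is merely \emph{some} path from $x=\gamma_1(a)$ to $y=\gamma_2(b)$, so its length bounds $d(x,y)$ from \emph{above}: you obtain $d(x,y)\le L_{\mathrm{sweep}}$ and $L_{\mathrm{sweep}}\ge\tilde c$, and these cannot be chained into the hinge comparison $d(x,y)\ge\tilde c$ that you need. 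The standard repair applies the Jacobi estimate to the opposite side rather than to the sweep. Since $\sec\le k$, there are no conjugate points at distance $<D_k$, and Rauch implies that $d\exp_q$ does not decrease norms relative to the model; equivalently, the map from $B$ to $M_k^n$ preserving polar coordinates centered at $q$ is $1$-Lipschitz. Applying it to the $M$-geodesic $\sigma$ from $x$ to $y$ (which stays in $B$ by convexity) yields a model path of length at most $d(x,y)$ joining the endpoints of the comparison hinge, whence $d(x,y)\ge\tilde c$, and monotonicity of the model side length in the angle gives $\theta\le\bar\theta$. With this substitution, the rest of your outline --- the angle condition for all triangles in $B$, then the equivalence of the angle and distance formulations of CAT($k$) for uniquely geodesic spaces, \cite[II.1.7]{BH99} --- goes through.
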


For a metric space $X$ with curvature bounded from above in the Alexandrov sense we denote the (completion of the) space of directions at a point $p\in X$ by $\Sigma_p X$ and the tangent cone of $X$ at $p$, which is isometric to the Euclidean cone over the space of directions $\Sigma_p X$, by $T_pX$, cf. \cite[II 3]{BH99}. A proof of the following statement, first outlined by Kleiner and Leeb \cite{KL97}, can be found in \cite[II 3.19]{BH99}.

\begin{thm}[Nikolaev, \cite{N95}]\label{thm:tangent_cat} If a metric space has curvature bounded from above in the Alexandrov sense, then all spaces of directions are  $\mathrm{CAT}(1)$ and all tangent spaces are  $\mathrm{CAT}(0)$.
\end{thm}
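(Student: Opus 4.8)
The plan is to reduce both assertions to the stability of upper curvature bounds under rescaling together with Gromov--Hausdorff limits, and then to invoke the characterization of upper curvature bounds for Euclidean cones. Since the statement is local and infinitesimal, I would first fix a point $p \in X$ and, using the hypothesis, choose a radius $r>0$ such that the ball $B_r(p)$ is CAT($k$) for some $k \in \R$. Because the tangent cone $T_pX$ is by definition the Euclidean cone over the space of directions $\Sigma_pX$, and because the Euclidean cone $C_0Y$ over a metric space $Y$ is CAT($0$) if and only if $Y$ is CAT($1$) (see \cite[II.3]{BH99}), it suffices to prove that $T_pX$ is CAT($0$); the CAT($1$) statement for $\Sigma_pX$ then follows immediately.

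The engine of the argument is the monotonicity of comparison angles in spaces with an upper curvature bound: in a CAT($k$) space the comparison angle $\tilde\angle_p(x,y)$ is nondecreasing as $x$ and $y$ move toward $p$ along fixed geodesics, which guarantees that Alexandrov angles between geodesics issuing from $p$ exist and that the angular metric on $\Sigma_pX$ is well defined, cf. \cite[II.3]{BH99}. Exploiting this monotonicity, I would show that the pointed rescaled spaces $(X,\lambda\, d, p)$ converge, as $\lambda \to \infty$, to the tangent cone $T_pX$ in the pointed Gromov--Hausdorff topology: the behavior of geodesics based at $p$ and the angles between them control the metric on small balls uniformly, and the resulting blow-up limit is exactly the Euclidean cone over $\Sigma_pX$.

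Rescaling a metric by a factor $\lambda$ turns a CAT($k$) space into a CAT($k/\lambda^2$) space, so the approximating spaces $(X,\lambda\, d, p)$ are CAT($k/\lambda^2$) with $k/\lambda^2 \to 0$. Appealing to the stability of upper curvature bounds under pointed Gromov--Hausdorff convergence---a Gromov--Hausdorff limit of geodesic CAT($k_n$) spaces with $k_n \to k_\infty$, the limit itself being geodesic, is CAT($k_\infty$)---I conclude that $T_pX$ is CAT($0$); the geodesic hypothesis on the limit is automatic here since a Euclidean cone is geodesic. Combining this with the cone characterization from the first paragraph yields that $\Sigma_pX$ is CAT($1$) and $T_pX$ is CAT($0$), as claimed.

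The main obstacle is making the two convergence and stability statements precise. Establishing that the rescalings actually converge to the tangent cone requires uniform estimates on geodesics and angles at arbitrarily small scales, and the decisive quantitative input is precisely the angle monotonicity above; without it the Alexandrov angle need not exist and the blow-up need not coincide with the cone. Verifying that the CAT bound descends to the limit is equally delicate, since one must show that geodesics and midpoints in the approximating spaces converge to geodesics and midpoints in the limit and that the comparison inequality survives the passage to the limit, all while simultaneously tracking the varying curvature bound $k/\lambda^2$. Once these points are secured, the remaining steps are formal.
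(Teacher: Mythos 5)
This theorem is not proved in the paper at all: it is quoted as a known result of Nikolaev \cite{N95}, with the proof deferred to \cite[II.3.19]{BH99} (an argument first outlined by Kleiner--Leeb \cite{KL97}). So your proposal must stand on its own, and it contains a genuine gap at its central step: the claim that the rescaled spaces $(X,\lambda d,p)$ converge in the pointed Gromov--Hausdorff topology to the tangent cone $T_pX$ is false in general for spaces with an upper curvature bound. A counterexample is the metric comb $X\subset \R^2$ given by the segment $[-1,1]\times\{0\}$ together with the vertical ``teeth'' $\{1/m\}\times[0,1/m]$, $m\in\N$, equipped with the induced length metric. This is a compact metric tree, hence CAT($0$), and every geodesic issuing from the origin $p$ initially runs along the horizontal axis, so $\Sigma_pX$ consists of two points at distance $\pi$ and $T_pX\cong\R$. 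However, the unit ball of $(X,\lambda d,p)$ contains on the order of $\lambda$ teeth of rescaled height comparable to $1$, so the rescalings are not uniformly totally bounded: they have no pointed Gromov--Hausdorff limit at all (their ultralimit is an $\R$-tree with a continuum of branches, which is not isometric to $T_pX$). The underlying reason is that the angle monotonicity you invoke gives only a one-sided estimate: comparison angles decrease \emph{to} the Alexandrov angle as the points approach $p$, so the cone distance on $T_pX$ is merely a lower bound for the blown-up distances in the limit, not their limit. In the comb, the tip of the $m$-th tooth and the axis point at the same distance from $p$ have the same direction and radius, hence cone distance $0$, yet their rescaled distance stays bounded away from $0$. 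Since your entire argument routes through this identification of the blow-up with the tangent cone, the proof breaks here; the remaining ingredients (rescaling CAT($k$) to CAT($k/\lambda^2$), stability of CAT bounds under limits, and Berestovskii's theorem that the Euclidean cone over $Y$ is CAT($0$) iff $Y$ is CAT($1$)) are all correctly used but cannot close the gap.

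Equality of the blow-up limit and the tangent cone does hold under additional hypotheses, namely local compactness together with geodesic completeness (note the comb fails the latter), but these are not part of the statement. This is precisely why the standard proof that the paper points to does not use blow-ups: in \cite[II.3.19]{BH99} one verifies the CAT($1$) triangle comparison directly on the (completed) space of directions, using the law of cosines comparison and upper semicontinuity of angles in $X$, and only then applies the cone theorem to conclude that $T_pX$ is CAT($0$). If you want to salvage a limit-based argument, you would either have to restrict the theorem to locally compact, geodesically complete spaces (which would still suffice for the orbifold quotients $\R^n/\Gamma$ and $S^n/\Gamma$ used in this paper, but proves less than the stated theorem), or replace Gromov--Hausdorff limits by ultralimits and then prove separately that $T_pX$ embeds as a convex subset of the ultralimit---a nontrivial assertion that your proposal does not address.
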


\subsection{Riemannian orbifolds}

A Riemannian $n$-orbifold $\Orb$ can be defined as a length space which is locally isometric to the quotient of a Riemannian $n$-manifold by an isometric action of a finite group \cite{La20}. The isotropy group of the preimage of a point $p\in \Orb$ in such a manifold chart under the finite group action is uniquely defined up to conjugation in $\Or(n)$, and it is called the local group of $\Orb$ at $p$. The set of points with trivial local group is called the regular part of $\Orb$. From the local model it is easy to see that the regular part is open and dense. Hence, if the curvature of a Riemannian orbifold is bounded from above by $k$ in the Alexandrov sense, then the sectional curvature of its regular part must satisify the same curvature bound by Theorem \ref{thm:alexandrov_char}. Moreover, since the regular part is dense, the sectional curvature bound must be satisfied everywhere.

\subsection{Proof of Theorem \ref{thm:main_thm} and Corollary \ref{thm:gen_m_g_cat}}

The space of directions of a Riemannian orbifold $\Orb$ at a point $p\in \Orb$ with local group $\Gamma_p$ is isometric to $S^{n-1}/\Gamma_p$, where $S^{n-1}$ denotes the unit sphere in $\R^n$, and the tangent cone of $\Orb$ at $p$ is isometric to $\R^n/\Gamma_p$. Therefore, the statement in Theorem \ref{thm:main_thm} that local groups are generated by reflections if the curvature is bounded from above in the Alexandrov sense is a consequence of the following lemma.

\begin{lem}\label{lem:local_cat_char}
Let $\Gamma < \Or (n)$ be a finite subgroup such that $\R^n/\Gamma$ is $\mathrm{CAT}(0)$. Then $\Gamma$ is generated by reflections.
\end{lem}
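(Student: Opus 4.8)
The plan is to argue by induction on $n$, the base case $n\le 1$ being immediate since the only nontrivial finite subgroup of $\Or(1)$ is generated by the reflection $-1$. Throughout I write $W\subseteq\Gamma$ for the (automatically normal) subgroup generated by all reflections contained in $\Gamma$, and $H=\Gamma/W$; the goal is exactly to show $\Gamma=W$, i.e. that $H$ is trivial.

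First I would exploit the cone structure together with Theorem \ref{thm:tangent_cat}. For a nonzero vector $v$ with stabilizer $\Gamma_v<\Or(n)$, the tangent cone of $\R^n/\Gamma$ at the image of $v$ is isometric to $\R\times(v^\perp/\Gamma_v)$, since $\Gamma_v$ fixes $v$ and acts faithfully and orthogonally on $v^\perp\cong\R^{n-1}$. Tangent cones of $\mathrm{CAT}(0)$ spaces are $\mathrm{CAT}(0)$, and a product $\R\times Y$ is $\mathrm{CAT}(0)$ only if $Y$ is; hence $\R^{n-1}/\Gamma_v$ is $\mathrm{CAT}(0)$. By the induction hypothesis $\Gamma_v$ is generated by reflections of $v^\perp$. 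Each such reflection fixes $v$ together with a hyperplane of $v^\perp$, hence fixes a hyperplane of $\R^n$ and is a genuine reflection lying in $\Gamma$; therefore $\Gamma_v\subseteq W$ for every $v\neq0$.

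Consequently $H$ acts freely on $(\R^n\setminus\{0\})/W$, because the $H$-stabilizer of $Wx$ equals $\Gamma_xW/W$, which is trivial once $\Gamma_x\subseteq W$. In particular $H$ acts freely by isometries on the spherical quotient $\Delta=S^{n-1}/W$, and by Theorem \ref{thm:tangent_cat} the space of directions $S^{n-1}/\Gamma=\Delta/H$ of $\R^n/\Gamma$ at the apex is $\mathrm{CAT}(1)$. I then split into two cases. If $W\neq1$, decompose $\R^n=\mathrm{Fix}(W)\oplus U$ into the fixed subspace and its orthogonal complement, on which $W$ acts as an essential reflection group. Since $W$ is normal, $\Gamma$ preserves this splitting, so $H$ respects the induced spherical join decomposition $\Delta=S(\mathrm{Fix}(W))\ast\delta$ with $\delta=S(U)/W$ an essential spherical Weyl chamber. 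Being the spherical section of a pointed simplicial cone, $\delta$ lies in an open hemisphere and thus possesses a unique circumcenter; the isometries in $H$ permute the walls of the chamber and therefore fix this circumcenter. This produces an $H$-fixed point of $\Delta$, which together with freeness forces $H=1$, i.e. $\Gamma=W$.

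It remains to treat the case $W=1$, where I must show $\Gamma=1$. Here $\Gamma$ acts freely on $S^{n-1}$, so $S^{n-1}/\Gamma$ is a spherical space form, and it is $\mathrm{CAT}(1)$ by the previous paragraph. A nontrivial such space form contains a closed geodesic of length at most $\pi$ (the projection of a minimal-displacement great arc of a nontrivial element), in particular of length less than $2\pi$, whereas every closed geodesic in a $\mathrm{CAT}(1)$ space has length at least $2\pi$; hence $\Gamma=1$. The main obstacle is precisely this final step concerning the residual free action: isolating the $H$-fixed circumcenter in the reflective case and ruling out nontrivial free spherical space forms in the reflection-free case. The reduction of everything else to the induction hypothesis via tangent cones is comparatively routine.
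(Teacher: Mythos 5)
Your proposal is correct and follows essentially the same route as the paper's own proof: induction on dimension via tangent cones, the normal subgroup generated by reflections (which coincides with the paper's subgroup generated by all point stabilizers), freeness of the induced quotient-group action, a canonical fixed point (your circumcenter, the paper's barycenter) of the spherical Weyl chamber in the join splitting when that subgroup is nontrivial, and a short closed geodesic contradicting $\mathrm{CAT}(1)$ when it is trivial. The only differences are cosmetic: you verify the freeness statement inline rather than citing \cite[Lemma~4.16]{La19}, and you use only the geometric fixed-point argument, which the paper presents as an alternative to its topological (contractibility) argument.
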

\begin{proof} We prove the claim by induction on $n$. For $n=1$ the claim is obvious.

Assume that the claim holds for some $n \in \N$ and let $\Gamma < \Or (n+1)$ be a finite subgroup such that $\R^{n+1}/\Gamma$ is CAT(0). Then $\Sigma_{\overline{0}}(\R^{n+1}/\Gamma)=S^n/\Gamma$ is CAT(1) by Theorem \ref{thm:tangent_cat}. Hence, for any $v\in S^n$ we have that $T_{\overline{v}}(S^{n}/\Gamma)=T_v S^n/\Gamma_v=\R^n/\Gamma_v$ is CAT(0) and so for any $v\in S^n$ the isotropy group $\Gamma_v$ is generated by reflections by our induction assumption.

We set $\Gamma' = \left\langle \Gamma_v \mid v \in S^n \right\rangle$. By construction $\Gamma'$ is a normal subgroup of $\Gamma$ generated by reflections. The induced action of $\Gamma/\Gamma'$ on $S^n/\Gamma'$ is isometric and free, see \cite[Lemma~4.16]{La19}.

We first suppose that $\Gamma'$ is nontrivial. In this case $\R^{n+1}/\Gamma'$ is isometric to a Weyl chamber \cite{Hu90} and so the quotient $S^n/\Gamma'$  is contractible. This implies $\Gamma=\Gamma'$ since a nontrivial finite group cannot act freely on a finite dimensional complex (although it can act without fixed points \cite{FR59}). Alternatively, we can also argue geometrically as follows. The group $\Gamma$ leaves the subspace 
\[V=\mathrm{Fix}(\Gamma')=\{v\in \R^n \mid gv = v \text{ for all } g\in \Gamma'\}\cong \R^k\]
and its orthogonal complement invariant. Since $\Gamma'$ is nontrivial we have that $k>0$. Then $\Delta=S^{n-k}/\Gamma'$ is a (strictly convex) spherical simplex \cite{Hu90}. The join splitting $S^n=S^{k-1} * S^{n-k}$ induces a splitting $S^n/\Gamma'=S^{k-1} *\Delta$ of its $\Gamma'$-quotient whose subspaces $S^{k-1}$ and $\Delta$ are invariant under the $\Gamma/\Gamma'$ action. Since the barycenter of $\Delta$ is fixed by $\Gamma/\Gamma'$, we again conclude that $\Gamma=\Gamma'$.

Hence, we can assume that $\Gamma'$ is trivial. In this case $\Gamma$ acts freely on $S^n$. Therefore, for any nontrivial $g\in \Gamma$ there exists a periodic geodesic on $S^n$ which together with its orientiation is preserved by $g$. Suppose that there is such a nontrivial $g$. Then a corresponding invariant geodesic projects onto a periodic geodesic of $S^n/\Gamma$ of length $<2\pi$, again because the action of $\Gamma$ on $S^n$ is free. This contradicts the fact that $S^n/\Gamma$ is CAT($1$), see \cite[Proposition~2.2.7]{AKP19}, and so $\Gamma$ has to be trivial as well in this case. The claim follows.
\end{proof}

This completes the proof of the if direction of Theorem \ref{thm:main_thm}. The proof of the only if direction is based on the fact that the quotient $\R^n/\Gamma$ of $\R^n$ by a reflection group $\Gamma<\Or(n)$ is isometric to a Weyl chamber in $\R^n$ \cite{Hu90}. If a finite group $\Gamma_p$ acts isometrically on a Riemannian manifold $M$ fixing a point $p\in M$ such that the induced action of $\Gamma_p$ on $T_pM$ is generated by reflections, then a small $r$-neighborhood of the image of $p$ in $M/\Gamma_p$ is isometric to the image under the exponential map $\exp_p: T_pM \To M$ of the intersection of a corresponding Weyl chamber with a small $r$-neighborhood of $0\in T_pM$. The claim now follows from the observation that for sufficiently small $r>0$ this image is a convex subset of $M$, which is an easy exercise in Riemannian geometry.

\begin{proof}[Proof of Corollary \ref{thm:gen_m_g_cat}]
To prove Corollary \ref{thm:gen_m_g_cat} we first observe that an upper curvature bound in the Alexandrov sense for a neighborhood $U$ in $M/G$ implies an upper bound on the sectional curvature of the intersection of $U$ with the principle stratum of $M/G$ by Theorem \ref{thm:alexandrov_char}. In this case $U$ is a Riemannian orbifold by \cite[Theorem~1.1]{LT10} and hence a reflectofold by Theorem~\ref{thm:main_thm}. The reverse implication also follows from Theorem~\ref{thm:main_thm}. 

The other equivalence follows from \cite[Theorem~1.1]{LT10} together with the observation that the quotient $\Sigma/\Pi$ of a section $\Sigma$ of the polar action of $G_p$ on $T_pM$ modulo the polar group $\Pi$ is isometric to the tangent cone of the orbifold $M/G$ at $\overline{p}$
\end{proof}

\emph{Acknowledgements.} The author thanks Claudio Gorodoski for discussions about polar actions and the members of the LMU geometry seminar for useful questions and comments.


\begin{thebibliography}{[GLLM22]}

\bibitem[AKP19]{AKP19}  S.~Alexander, V.~Kapovitch and A.~Petrunin, An invitation to Alexandrov geometry. CAT(0) spaces. SpringerBriefs in Mathematics. Springer, Cham, 2019. xii+88 pp.

\bibitem[AB15]{AB15} M.~Alexandrino and R.~Bettiol, Lie groups and geometric aspects of isometric actions. Springer, Cham, 2015. x+213 pp.

\bibitem[Al51]{Al51} A. D.~Alexandrov, A theorem on triangles in a metric space and some of its applications. (Russian) Trudy Mat. Inst. Steklov. 38 (1951), 5–23.Izdat. Akad. Nauk SSSR, Moscow.

\bibitem[BN93]{BN93} V. N.~Berestovskij and I. G.~Nikolaev, Multidimensional generalized Riemannian spaces. Geometry, IV, 165–243, 245–250, Encyclopaedia Math. Sci., 70, Springer, Berlin, 1993.

\bibitem[BH99]{BH99} M. R. Bridson, A. Haefliger, Metric spaces of non-positive curvature. Grundlehren der mathematischen Wissenschaften [Fundamental Principles of Mathematical Sciences], 319. Springer-Verlag, Berlin, 1999. xxii+643 pp.

\bibitem[BBI01]{BBI01} D.~Burago, Y.~Burago and S.~Ivanov, A course in metric geometry. Graduate Studies in Mathematics, 33. American Mathematical Society, Providence, RI, 2001.

\bibitem[Ca28]{Ca28} É.~Cartan, Leçons sur la géométrie des espaces de Riemann. Gauthier-Villars, Paris, 1928, 2nd edition 1951.

\bibitem[Da11]{Da11}  M. W. Davis, Lectures on orbifolds and reflection groups, in {\it Transformation groups and moduli spaces of curves}, 63--93, Adv. Lect. Math. (ALM), 16, Int. Press, Somerville, MA, 2011.

\bibitem[GL15]{GL15} C. Gorodski and A. Lytchak, Representations whose minimal reduction has a toric identity component. Proc. Amer. Math. Soc. 143 (2015), no. 1, 379–386.

\bibitem[GLLM22]{GLLM22} C.~Gorodski, C.~Lange, A~Lytchak and R.~A.~E.~Mendes, A diameter gap for quotients of the unit sphere. J. Eur. Math. Soc. (2022), DOI 10.4171/JEMS/1272.

\bibitem[GL16]{GL16} C. Gorodski and A. Lytchak, Isometric actions on spheres with an orbifold quotient. Math. Ann. 365 (2016), no. 3-4, 1041–1067. 

\bibitem[GZ12]{GZ12} K. Grove and W. Ziller, Polar manifolds and actions. J. Fixed Point Theory Appl. 11 (2012), no. 2, 279–313. 

\bibitem[Hu90]{Hu90} J. E. Humphreys, Reflection groups and Coxeter groups. Cambridge Studies in Advanced Mathematics, vol. 29, Cambridge University Press, Cambridge, 1990.

\bibitem[KL97]{KL97} B.~Kleiner and B.~Leeb, Rigidity of quasi-isometries for symmetric spaces and Euclidean buildings. Inst. Hautes Études Sci. Publ. Math. No. 86 (1997), 115–197 (1998). 

\bibitem[La16]{La16}  C.~Lange, Characterization of finite groups generated by reflections and rotations. J. Topol. 9 (2016), no. 4, 1109–1129.

\bibitem[La19]{La19} C.~Lange, When is the underlying space of an orbifold a manifold? Trans. Amer. Math. Soc. 372 (2019), no. 4, 2799–2828.

\bibitem[La20]{La20} C.~Lange, Orbifolds from a metric viewpoint.  Geom. Dedicata, \textbf{209} (2020), 43--57.

\bibitem[LT10]{LT10} A. Lytchak and G. Thorbergsson, Curvature explosion in quotients and applications. J. Differential Geom. 85 (2010), no. 1, 117–139.

\bibitem[N95]{N95} I. Nikolaev, The tangent cone of an Aleksandrov space of curvature $\leq k$. Manuscripta Math. 86 (1995), no. 2, 137–147.

\bibitem[FR59]{FR59} E.~Floyd and R. W. Richardson, An action of a finite group on an n-cell without stationary points. Bull. Amer. Math. Soc. 65 (1959), 73–76. 

\end{thebibliography}
\end{document}